\documentclass[12pt]{article}
\usepackage{amssymb,amsmath, amsthm, amscd,ifthen}
\usepackage[T2A]{fontenc}
\usepackage[cp1251]{inputenc}
\usepackage[russian,english]{babel}
\usepackage[dvips]{graphicx}
\usepackage{indentfirst}
\usepackage{bm}
\textheight210truemm
\textwidth160truemm
\hoffset=-10mm
\voffset=-15mm

\newcommand{\N}{{\mathbb N}}
\newtheorem*{thm*}{Theorem}
\newcommand{\ff}{{\mathcal F}}

\newtheorem*{cla*}{Claim}

\newtheorem{thm}{Theorem}

\newtheorem*{gypo}{Conjecture}

\newtheorem{prop}[thm]{Proposition}
\newtheorem{lem}[thm]{Lemma}

\date{}
\title{Random Kneser graphs and hypergraphs}
\begin{document}
\author{Andrey Kupavskii\footnote{Laboratory of Advanced Combinatorics and Network Applications at Moscow Institute of Physics and Technology, University of Oxford; Email: {\tt kupavskii@ya.ru} \ \ Research supported by the grant RNF~16-11-10014.}}

\maketitle

\begin{abstract} The Kneser graph $KG_{n,k}$ is the graph whose vertices are the $k$-element subsets of $[n],$ with two vertices adjacent if and only if the corresponding sets are disjoint. A famous result due to Lov\'asz states that the chromatic number of $KG_{n,k}$ is equal to $n-2k+2$. In this paper we discuss the chromatic number of random Kneser graphs and hypergraphs. It was studied in two recent papers, one due to Kupavskii, who proposed the problem and studied  the graph case, and the more recent one due to Alishahi and Hajiabolhassan.
The authors of the latter paper had extended the result of Kupavskii to the case of general Kneser hypergraphs. Moreover, they have improved the bounds of Kupavskii in the graph case for many values of parameters.

In the present paper we present a purely combinatorial approach to the problem based on blow-ups of graphs, which gives much better bounds on the chromatic number of random Kneser and Schrijver graphs and Kneser hypergraphs. This allows us to improve all known results on the topic. The most interesting improvements are obtained in the case of $r$-uniform Kneser hypergraphs with $r\ge 3$, where we managed to replace certain polynomial dependencies of the parameters by the logarithmic ones.
\end{abstract}

\section{Introduction}
Kneser graphs and hypergraphs are very popular and well-studied objects in combinatorics. Fix some positive integers $n,k,r$, where $r\ge 2$. The set of vertices of the Kneser $r$-graph $KG_{n,k}^r$ is the set of all $k$-element subsets of $[n]$, denoted by ${[n]\choose k}$. The set of edges of $KG_{n,k}^r $ consists of all $r$-tuples of pairwise disjoint subsets. Thus, $KG_{n,k}^r$ is non-empty only if   $n\ge kr$. Substituting $k = 1$ in the definition gives the complete $r$-graph on $n$ vertices. When dealing with the graph case $r=2$, we omit the superscript in the notation of Kneser graphs. For a hypergraph $\mathcal H$ we denote by $\chi(\mathcal H)$ its \textit{chromatic number}, that is, the minimum number $\chi$ such that there exists a coloring of vertices $\mathcal H$ into $\chi$ colors that leaves no edge of $\mathcal H$ monochromatic.

 Naturally, Kneser graphs were studied first. They earned their name from M. Kneser, who investigated them in the paper \cite{Knez}. He showed that $\chi(KG_{n,k})\le n-2k+2$ and conjectured that this bound is tight. This conjecture (or rather its resolution) played a very important role in combinatorics. It was confirmed by L. Lov\'asz \cite{Lova}, who, in order to resolve it, introduced tools from algebraic topology to combinatorics.

 Once \cite{Lova} appeared, there was a burst of activity around Kneser graphs. I. B\' ar\'any \cite{Bar} gave an elegant alternative proof of Lov\'asz' result, and several authors studied the chromatic number of Kneser (disjointness) graphs of arbitrary set systems. In particular, there were results due to V. Dol'nikov \cite{Dol} and A. Schrijver \cite{Sch}.

 In \cite{Sch}, Schrijver studied induced subgraphs $SG_{n,k}$ of $KG_{n,k}$ constructed on the family of all $k$-element \textit{stable sets} of the cycle $C_n$. In other words, the underlying family contains all $k$-element sets that do not have two cyclically consecutive elements of $[n]$. Schrijver noticed that a slight modification of B\' ar\'any's proof yields a stronger statement: $\chi(SG_{n,k})=n-2k+2.$ In the harder part of the paper, he also showed that $SG_{n,k}$ is a vertex-critical subgraph of $KG_{n,k}$, i.e. that any proper induced subgraph of $SG_{n,k}$ has strictly smaller chromatic number.

 A coloring of $KG_{n,k}$ in $n-2k+2$ colors is easy to obtain: for each $1\le i\le n-2k+1$ color the sets with minimum element $i$ into color $i$, and color the remaining sets, forming the family ${[n-2k+2,n]\choose k}$, into color $0$. A similar coloring for $KG^r_{n,k}$ gives the upper bound $\chi(KG^r_{n,k})\le \big\lceil\frac{n-r(k-1)}{r-1}\big\rceil$: for $1\le i\le n-kr+1$ color the sets with minimum element in  $[(r-1)(j-1)+1,(r-1)j]$ into color $j$, and color the sets from ${[n-rk+2,n]\choose k}$ into color $0$. P. Erd\H os \cite{Erd1} conjectured that this bound is sharp for all $r\ge 2$. After some partial progress it was confirmed in full generality by N. Alon, P. Frankl, and L. Lov\'asz \cite{AFL}. The proof again used topological tools.

Generalizing both the result of Dolnikov \cite{Dol} and Alon, Frankl, and Lovasz \cite{AFL}, I. K\v r\'\i\v z \cite{Kr1}, \cite{Kr2} obtained the bound on the chromatic number of Kneser hypergraphs of general set families. Later, an elegant alternative proof was obtained by J. Matou\v sek \cite{Mat2}, and some more general results with combinatorial proofs were obtained by G. Ziegler \cite{Z}. We also refer to the amazing book written by J. Matou\v sek on the subject \cite{Mat}.

In \cite{ADL}, N. Alon, L. Drewnowski and T. \L uczak  applied results on colorings of Kneser-type hypergraphs for constructing certain ideals in $\N$. The hypergraphs they considered are called \textit{$s$-stable Kneser hypergraphs}, and they may be seen as a generalization of Schrijver graphs. The underlying set family that defines the $s$-stable Kneser $r$-hypergraph $KG_{n,k}^{r,\ s-\text{stable}}$ consists of all $k$-element subsets $\{i_1,\ldots, i_k\}\subset [n]$, whose consecutive elements are sufficiently far apart: if $1\le i_1<\ldots<i_k\le n$, then for any $j=0,\ldots, k-1$ the elements satisfy $i_{j+1}-i_{j}\ge s$, as well as $i_1+n-i_k\ge s$. The case $r=s=2$ corresponds to Schrijver graphs.

In their paper, Alon, Drewnowski and \L uczak proved and applied the following result: $\chi(KG_{n,k}^{r,\ r-\text{stable}}) = \chi(KG_{n,k}^r)$ for $r=2^t,\ t\in\N$. They have also stated explicitly the conjecture tracing back to Ziegler's paper \cite{Z}, which says that the same equality holds for any $r$. We are going to use this result of \cite{ADL}, and we note that it would have improved some of the bounds in this paper, shall the conjecture be verified. A more general conjecture was made by F. Meunier \cite{M}: $\chi(KG_{n,k}^{r,\ s-\text{stable}}) = \big\lceil\frac{n-s(k-1)}{r-1}\big\rceil$ for any $s\ge r$. It was verified in some cases, but is still wide open in general.\\

In fact, Kneser was not the first to ask a question concerning Kneser graphs.  P. Erd\H os, C. Ko, and R. Rado \cite{EKR} proved that the size of the largest family of $k$-element subsets of $[n]$ with no two disjoint sets is at most ${n-1\choose k-1}$, provided that $n\ge 2k$. In terms of $KG_{n,k}$, they determined its independence number, that is, the maximum size of a subset of vertices not containing an edge of the graph. Later, Erd\H os \cite{E} asked a more general question: what is the size of the largest family of $k$-element subsets of $[n]$ with no $r$ pairwise disjoint sets? This is obviously a question about the independence number of $KG_{n,k}^r$, and, unlike the question on the chromatic number, it does not have a complete solution yet. However, the question was resolved for a wide range of parameters by P. Frankl \cite{F4} and by Frankl and the author \cite{FK13}. For some recent progress on the subject see \cite{FK6}, \cite{FK7}.\\

 An $r$-uniform Kneser hypergraph of any $k$-uniform set system is an induced subgraph of $KG_{n,k}^r$, and thus the results on the chromatic number of induced subgraphs of $KG_{n,k}^r$ belong to the class of results discussed above. Instead of restricting to a subset of vertices, in this paper we study, what happens if we restrict to a subset of edges. The most natural model to study is the binomial model of  a random hypergraph. For a hypergraph $\mathcal H$ and a real number $p$, $0<p<1$, define the {\it random hypergraph} $\mathcal H(p)$ as follows: the set of vertices of $\mathcal H(p)$ coincides with that of $\mathcal H$, and the set of edges of $\mathcal H(p)$ is a subset of the set of edges of $\mathcal H$, with each edge from $\mathcal H$ taken independently and with probability $p$. The results on random graphs and hypergraphs, roughly speaking, tell us how does a \textit{typical} subgraph of a given (hyper)graph that contains a $p$-fraction of edges behave with respect to a given property. Theory of random graphs and hypergraphs is very rich in both results and open problems, and by no means we are going to give an overview of the field in this paper. We refer the reader to the books \cite{AS}, \cite{Boll} for some classical results on the subject.

 One class of questions that is particularly relevant for this paper deals with transference results. In general, we speak of transference if a certain combinatorial result holds with no changes in the random setting. One example of such theorem is due to B. Bollob\'as, B. Narayanan and A. Raigorodskii \cite{BNR}. They studied the size of maximal independent sets in $KG_{n,k}(p)$, and showed that for a wide range of parameters the independence number of $KG_{n,k}(p)$ is \textit{exactly} the same as that of $KG_{n,k}$, given by the Erd\H os--Ko--Rado theorem. Later on, their result was further strengthened by J. Balogh, B. Bollob\'as, and B. Narayanan \cite{BBN}, S. Das and T. Tran \cite{DT} and P. Devlin and J. Kahn \cite{DK}.

In the paper \cite{Kup}, the author studied the behaviour of the chromatic number of $KG_{n,k}(p)$ and $SG_{n,k}(p)$, showing that, compared to $\chi(KG_{n,k})$, it does change at most by a small additive term in a very wide range of parameters.

Random subgraphs of more general graphs $K(n,k,l)$ were investigated by L. Bogolyubskiy, A. Gusev, M. Pyaderkin and A. Raigorodskii in \cite{BGPR,BGPR2}. The vertices of $K(n,k,l)$ are the $k$-element subsets of $[n]$, with two vertices adjacent if the corresponding sets intersect in exactly $l$ elements. In \cite{BKR,BGPR,BGPR2,Pyad, PR, Rai} the authors obtained several results concerning the independence number and the chromatic number of $K(n,k,l)(p)$ and related results. \\


In the recent paper \cite{AH}, which motivated in part the present paper, M. Alishahi and H. Hajiabolhassan generalized the results of \cite{Kup} to the case of Kneser hypergraphs of arbitrary set systems. They have also strengthened the results of \cite{Kup} in the graph case.  The proofs of Alishahi and Hajiabolhassan are quite technically involved and not easy to follow. They use topological methods to obtain their results.

In this paper we describe a purely combinatorial approach to the problem, which allows us to significantly improve all previously known bounds on the chromatic numbers in the most interesting cases: for random subgraphs of (complete) Kneser and Schrijver graphs and Kneser hypergraphs. Our method may be extended to more general classes of Kneser hypergraphs, which we discuss in Section~\ref{sec5}. This does not, however, cover all generalized Kneser hypergraphs, so the result of Alishahi and Hajabolhassan remains best known in some cases.

\section{The old and the new bounds}\label{sec2}
In this section we discuss both the old and the new quantitative bounds on the chromatic number of Kneser and Schrijver graphs and hypergraphs. We do not state the bounds in full generality as they depend on too many parameters and thus are very difficult to interpret. We preferred clarity to generality, and instead focused on several most interesting (in our opinion) cases. These cases were also discussed in \cite{Kup} and \cite{AH}, so we can compare the results. The bounds in their full generality appear in the latter sections.

For the rest of the section, we assume that $r\ge 2, p>0$ are fixed. Note that in the case $r=2$ we formulate our results for Schrijver graphs $ SG_{n,k}(p)$. The same bounds hold for Kneser graphs, since  Schrijver graphs  are subgraphs of Kneser graphs.

We henceforth use the notation $f(n)\gg g(n)$ in a slightly unconventional way. This inequality should be read as: there exists a sufficiently large constant $C$, where $C=C(r,p,k)$ if $k$ is fixed, or $C=C(r,p,l)$ if $l$ is fixed, such that $f(n)\ge Cg(n)$ for all sufficiently large $n$. All logarithms with unspecified base have base $e$. All statements below hold asymptotically almost surely (a.a.s.) for $n\to \infty$, and so we omit writing a.a.s. in most statements for brevity.

Returning to the results on colorings, the author \cite{Kup} proved that a.a.s.
\begin{align}\label{eq01}(\text{\bf$\pmb{l=1}$) } \ \ \chi(SG_{n,k})(p)&\ge \chi(KG_{n,k+1}) = \chi(KG_{n,k})-2 \ \ \ \textit{if }n-2k \ll \sqrt n;\\
\label{eq02}\text{\bf(fixed $\pmb l$)} \ \ \chi(SG_{n,k})(p)&\ge \chi(KG_{n,k+l}) =\chi(KG_{n,k})-2l\ \ \ \textit{if }l\text{ is fixed}\ \text{and} \ k \gg n^{\frac 3{2l}};\\
\label{eq03}\text{\bf (fixed $\pmb k$)} \ \ \chi(SG_{n,k})(p)&\ge \chi(KG_{n,k+l}) =\chi(KG_{n,k})-2l\ \ \ \textit{if }k\text{ is fixed}\ \text{and}  \ l \gg n^{\frac 3{2k}}.
 \end{align}
Note that in the last two $k$ and $l$ are simply interchanged in the conditions needed for the inequality to hold. The following bounds were proven by Alishahi and Hajiabolhassan \cite{AH}:
\begin{align}\label{eq04}\text{\bf ($\pmb{l=1}$) } \ \ \chi(KG_{n,k}^r)(p)&\ge \chi(KG^r_{n,k+1})\ \ \ \textit{if }n-rk \ll n^{\frac{r-1}r};\\
\label{eq05}\text{\bf (fixed $\pmb l$)} \ \ \chi(KG^r_{n,k})(p)&\ge \chi(KG^r_{n,k+l}) \ \ \ \textit{if }l\text{ is fixed}\ \text{and}  \ k \gg n^{\frac {r}{lr-1}}\log^{\frac 1{lr-1}} n.
 \end{align}
We do not express $\chi(KG^r_{n,k+l})$ in terms of $\chi(KG^r_{n,k})$, since the formulas are much uglier in the hypergraph case. Note that for $r=2$ the bound (\ref{eq04}) coincides with (\ref{eq01}),  while (\ref{eq05}) improves on (\ref{eq02}).\\

In this paper we prove the following bounds.
\begin{thm}\label{thm0} Let $p\in (0,1)$ and $r\in \N, r\ge 2$, be fixed. \\
If $r=2^q$ for some $q\in \mathbb N$ then a.a.s.
\begin{align}\label{eq08}\text{\bf ($\pmb{l=1,\ r=2^q}$) } \ \ \chi(KG_{n,k}^r)(p)&\ge \chi(KG_{n,k+1}^r) \ \ \ \textit{if }n-rk \ll n^{r/(r+1)}\log^{-1/(r+1)}n.
\end{align}
If $r=2$ then a.a.s.
\begin{align}
\label{eq010}\text{\bf (fixed $\pmb {l,\ r=2}$)} \ \ \ \chi(KG_{n,k})(p)&\ge \chi(KG_{n,k+l}) \ \ \ \textit{if }l\text{ is fixed and} \ k \gg (n\log n)^{1/l};\ \ \ \\
\label{eq0101}\text{\bf (fixed $\pmb {k,\ r=2}$)} \ \ \ \chi(KG_{n,k})(p)&\ge \chi(KG_{n,k+l}) \ \ \ \textit{if }k\text{ is fixed and} \ l \gg (n\log n)^{1/k}.\ \
\end{align}
If $r=3$ then a.a.s.
\begin{align}
\label{eq011}\text{\bf (fixed $\pmb {l,\ r=3}$)} \ \ \chi(KG^3_{n,k})(p)&\ge \chi(KG^3_{n,k+l}) \ \ \ \textit{if }l\text{ is fixed and} \ k \gg \log^{1/(3l-4)} n;\\
\label{eq0111}\text{\bf (fixed $\pmb {k,\ r=3}$)} \ \ \chi(KG^3_{n,k})(p)&\ge \chi(KG^3_{n,k+l}) \ \ \ \textit{if }k\text{ is fixed and} \ l \gg \log^{2/(6k-11)} n.
\end{align}
If $r>3$ then a.a.s.
\begin{align}
\label{eq012}\text{\bf (fixed $\pmb {l,\ r>3}$)} \ \chi(KG^r_{n,k})(p)&\ge \chi(KG^r_{n,k+l}) \  \textit{if }l\text{ is fixed and} \ k \gg \log^{\frac 1{r(l-2)-1}} n;\\
\label{eq0121}\text{\bf (fixed $\pmb {k,\ r>3}$)} \ \chi(KG^r_{n,k})(p)&\ge \chi(KG^r_{n,k+l}) \  \textit{if }k\text{ is fixed and} \ l \gg \log^{\frac 1{r(k-1)-\frac{2r-1}{r-1}}} n.
 \end{align}
\end{thm}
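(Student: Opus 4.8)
The plan is to prove every part of Theorem~\ref{thm0} by a single \emph{colouring transfer}: from an arbitrary proper colouring of the random hypergraph $KG^r_{n,k}(p)$ I would manufacture a proper colouring of the dense hypergraph $KG^r_{n,k+l}$ (or, better, of an $r$-stable subhypergraph of it with the same chromatic number), using no new colours. Since the target chromatic number is known exactly, $\chi(KG^r_{n,k+l})=\lceil\frac{n-r(k+l-1)}{r-1}\rceil$ by Alon--Frankl--Lov\'asz, any such transfer forces $\chi(KG^r_{n,k}(p))\ge\chi(KG^r_{n,k+l})$, which is the assertion. Thus the probabilistic content is entirely in showing that such a transfer exists a.a.s.

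To build the transfer, fix (for contradiction) a proper colouring $c$ of $KG^r_{n,k}(p)$ with $N<\chi(KG^r_{n,k+l})$ colours. To each $(k+l)$-set $A$ I assign its \emph{majority colour} $c'(A)$, the colour occurring most often among the $\binom{k+l}{k}$ subsets in $\binom{A}{k}$; pigeonhole produces a monochromatic class $S(A)\subseteq\binom{A}{k}$ of size $t:=\lceil\binom{k+l}{k}/N\rceil$. The induced colouring $c'$ is proper as soon as $KG^r_{n,k}(p)$ has the \emph{covering property}: for every edge $\{A_1,\dots,A_r\}$ of the target and all $S_j\subseteq\binom{A_j}{k}$ with $|S_j|\ge t$, some transversal $\{B_1,\dots,B_r\}$, $B_j\in S_j$, is a present edge of $KG^r_{n,k}(p)$. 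Indeed, a monochromatic $c'$-edge would then contain a monochromatic present $c$-edge, contradicting properness of $c$. As the $A_j$ are disjoint, all $\prod_j|S_j|\ge t^{\,r}$ transversals are distinct $k$-edges, present independently with probability $p$, so one configuration fails with probability $(1-p)^{t^{\,r}}$.

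The proof then reduces to a union bound for the covering property, and the whole gain over \cite{Kup,AH} lies in \emph{minimising the number of configurations} while preserving the full target chromatic number. The key device is to restrict the target to $r$-stable sets: by the theorem of Alon, Drewnowski and \L uczak \cite{ADL} the $r$-stable hypergraph has the \emph{same} chromatic number exactly when $r=2^q$ (which is the source of that hypothesis in \eqref{eq08}), while the number of $r$-stable $(k+l)$-sets is only $\exp(O((n-rk)\log n))$ instead of $\exp(\Theta(n\log n))$. For $l=1$ one has $\binom{k+1}{k}=k+1\approx n/r$ and $N\lesssim(n-rk)/(r-1)$, so $t\approx\tfrac{(r-1)n}{r(n-rk)}$, and balancing $t^{\,r}\gg(n-rk)\log n$ against the logarithm of the configuration count rearranges precisely to $n-rk\ll n^{r/(r+1)}\log^{-1/(r+1)}n$, i.e. \eqref{eq08}. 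For $r=2$ with $l$ or $k$ fixed the target has $\chi\approx n$, concentration holds ($t\approx\binom{k+l}{k}/n$), and the binding requirement degenerates to $t\gg\log n$, i.e. $\binom{k+l}{k}\gg n\log n$, which is exactly \eqref{eq010}--\eqref{eq0101}.

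The main obstacle is the genuinely hypergraph regime $r\ge3$ with $k$ (or $l$) only polylogarithmic, underlying \eqref{eq011}--\eqref{eq0121}. There $\binom{k+l}{k}$ is far smaller than the number of colours, so $t=\lceil\binom{k+l}{k}/N\rceil=1$: majority colours do not concentrate, the class $S(A)$ is a single set, and the per-edge covering property above becomes vacuous. Here I would abandon the one-set majority and argue \emph{globally}, exploiting that each colour class is a large family whose transversals across the many disjoint $r$-stable $(k+l)$-sets it covers supply $\approx\binom{k+l}{k}^{r}$ independent present-edge trials against only polynomially many configurations; the balance then becomes $\binom{k+l}{k}^{r}\gg k^{\,r+1}\log n$, giving $k\gg\log^{1/(r(l-1)-1)}n$ for $r=3$ and the slightly weaker shifts recorded in \eqref{eq012}--\eqref{eq0121} for $r>3$. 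Making this global count rigorous — honestly accounting for the dependence between overlapping transversals and for the adversary's freedom in the colouring, which is what costs the extra factor $k^{\,r+1}$ over the naive threshold $k\gg\log^{1/(rl-1)}n$ — is the crux, and the step I expect to be hardest to get right.
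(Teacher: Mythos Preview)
Your treatment of \eqref{eq08} is correct and matches the paper: restrict the target to $r$-stable $(k{+}1)$-sets (using \cite{ADL} for $r=2^q$), apply the majority-colour transfer, and union-bound over the now much smaller edge set.

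However, your derivation of \eqref{eq010}--\eqref{eq0101} contains a genuine error. With the plain majority argument the failure probability is at most
\[
|E(KG_{n,k+l})|\binom{\binom{k+l}{k}}{t}^{2}(1-p)^{t^{2}},
\]
and since $n-2(k+l)=\Theta(n)$ here, the Schrijver restriction does not shrink $|E|$: one still has $\log|E|=\Theta\bigl((k+l)\log n\bigr)$. Hence the binding constraint is $t^{2}\gg(k+l)\log n$, \emph{not} $t\gg\log n$; with $t\approx\binom{k+l}{k}/n$ this gives $k\gg(n^{2}\log n)^{1/(2l-1)}$, strictly worse than \eqref{eq010}. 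The paper obtains \eqref{eq010}--\eqref{eq0101} by a different mechanism: a multi-scale refinement (Lemma~\ref{lem1}) that replaces $t$ in one of the two factors by a much larger quantity $z_{1}\approx\binom{2(k+l)}{k}/(k+l)$, so that the dominant constraint really does collapse to $t_{0}\gg\log n$, i.e.\ $\binom{k+l}{k}\gg n\log n$.

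The same refinement is what actually drives \eqref{eq011}--\eqref{eq0121}, and it is structurally different from the ``global count'' you sketch. The paper introduces a geometric sequence of scales $q_{0}=k+l,\,q_{1},\dots,q_{u}$ and proves a dichotomy: for any $d$-colouring of $KG^{r}_{n,k}$, either at some scale $q_{i}$ every $q_{i}$-set has a colour appearing $z_{i}:=\lceil\binom{q_{i}}{k}/(2sq_{i})\rceil$ times (note the denominator is $O(q_{i})$, not $d$, so $z_{i}\gg t_{i}$), in which case the usual argument yields a monochromatic $K^{r}[z_{i}]$; or some $q_{i}$-set carries more than $sq_{i}$ ``popular'' colours, and then one removes it, passes to scale $q_{i+1}$ on the complement, and shows via an explicit recolouring (merging those many popular colours) that the chromatic number of the remaining Kneser hypergraph is too small, contradicting the Alon--Frankl--Lov\'asz formula. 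This recolouring step is where the extra losses (the $k$ factor for $r>3$, the $l^{\beta-1}$ factors) enter, and it is the idea your proposal is missing.
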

We remark that the bounds with $r=2$ stated in the theorem hold for Schrijver graphs and, more generally, for $r$-stable $r$-uniform Kneser hypergraphs, when $r=2^t$ for some $t\in \N$.

In the graph case, we see that (\ref{eq08}), (\ref{eq010}), and (\ref{eq0101}) improve on (\ref{eq01}), (\ref{eq02}) and (\ref{eq03}), respectively.

Our most interesting results are (\ref{eq011})--(\ref{eq0121}). They are much stronger than (\ref{eq05}) and guarantee that the chromatic number of $KG_{n,k}^r(p)$ drops by no more than a small additive term  for already for polylogarithmic $k$ (this was known before for polynomial $k$).

One question that arises in this context is what makes the case $r=2$ so different from the case $r>2$? Can one obtain a bound similar to (\ref{eq011})--(\ref{eq0121}) for the case $r=2$?

In the next section we present the general approach to the problem, and obtain  inequality (\ref{eq08}). The approach, which is more adapted to our particular problem, is presented in Section \ref{sec4}. The rest of the inequalities from Theorem \ref{thm0} are obtained there. In Section~\ref{sec50} we prove some simple upper bounds. In Section \ref{sec5} we discuss some directions for further research.

\section{Basic approach}\label{sec3}
In this section we discuss the general method, proposed to us by N. Alon, along with some of its corollaries to the case of Kneser and Schrijver graphs and hypergraphs. We prove (\ref{eq08}) and reprove (\ref{eq04}) in this section.
\subsection{Coloring random subgraphs of blow-ups of hypergraphs}
We start with the following abstract theorem on hypergraph colorings, preceded by the definition of the class of hypergraphs in question.
For an $r$-uniform hypergraph $\mathcal H=(V, E)$ and a positive integer number $m$ consider the {\it $m$-blow-up} $\mathcal H[m]$ of $\mathcal H$: $\mathcal H[m]=(V', E'),$ where $V':=V\times[m]$, and $E':=\bigl\{\{v_1\times i_1,\ldots, v_r\times i_r\}: \{v_1,\ldots,v_r\}\in E, i_1,\ldots, i_r\in [m]\bigr\}$. Informally speaking, we replace each vertex of the original hypergraph with an $m$-tuple, and each edge with a complete $r$-partite hypergraph with $m$ vertices in each part.

We denote by $\mathcal A(\mathcal H,m)$ the class of hypergraphs that can be obtained from $\mathcal H[m]$ by identifying some vertices that do not belong to the same edge and do not arise from the same vertex of $H$. Formally, consider the class $\mathcal F$ of functions $f: V'\to [n]$ for some $n$, such that:\vskip+0.1cm\noindent
 1. For any  $v\in V$ and $i\ne j$ we have $ f(v\times i)\ne f(v\times j).$\\
  2. For any $e\in E'$ and $v_1,v_2\in e$ we have  $f(v_1)\ne f(v_2).$\\
   3. The function $f$ is surjective.\vskip+0.1cm

\noindent Then the class of hypergraphs $\mathcal A(H,m)$ is defined as follows: $$\mathcal A(H,m) := \Bigl\{\bigl(f(V'),E_f\bigr): f\in \mathcal F, E_f := \bigl\{\{f(v_1),\ldots,f(v_r)\}: \{v_1,\ldots,v_r\}\in E'\bigr\}\Bigr\}.$$

We denote by $K^r[m]$ the complete $r$-partite $r$-uniform hypergraph with parts of size $m$. For any $0<p<1$ and a hypergraph $\mathcal H$ we define the random hypergraph $\mathcal  H(p)$, which has the same set of vertices and in which each edge from $G$ is taken independently and with probability $p$.

\begin{thm}\label{thm1} Let $\mathcal H=(V,E)$ be an $r$-uniform hypergraph with $\chi(\mathcal H) = d+1$. Fix a number $m\in
\mathbb N$ and consider a hypergraph $\mathcal G\in \mathcal A(\mathcal H,m)$. Then for any coloring of $\mathcal G$ into $d$ colors there is a subhypergraph $K^r[\lceil\frac md\rceil]\subset G$ with all vertices colored in the same color.

Moreover, for any $0<p<1$ we have \begin{equation}\label{eq2}\Pr[\chi(\mathcal G(p))\le d]\le |E|{m\choose \lceil m/d\rceil}^r(1-p)^{\lceil m/d\rceil^r}.\end{equation}
\end{thm}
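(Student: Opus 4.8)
The plan is to establish the deterministic assertion first and then derive the probabilistic bound \eqref{eq2} from it by a union bound taken over ``potential monochromatic cliques'' rather than over colourings. To begin, fix a realisation of $\mathcal G\in\mathcal A(\mathcal H,m)$ through an identification function $f\in\mathcal F$, together with an arbitrary colouring $c$ of $\mathcal G$ into $d$ colours. Pulling $c$ back along $f$ produces a colouring $c'$ of the blow-up vertex set $V'=V\times[m]$ by $c'(v\times i):=c(f(v\times i))$. For each $v\in V$ the $m$ copies $v\times 1,\ldots,v\times m$ map under $f$ to $m$ \emph{distinct} vertices of $\mathcal G$ (condition~1), so by pigeonhole some colour occurs on at least $\lceil m/d\rceil$ of them; I pick one such colour and call it $a(v)$. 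The resulting map $a\colon V\to[d]$ is a $d$-colouring of $\mathcal H$, and since $\chi(\mathcal H)=d+1>d$ it cannot be proper, so there is an edge $\{v_1,\ldots,v_r\}\in E$ with $a(v_1)=\cdots=a(v_r)=:\alpha$. For each $j$ I set $S_j:=\{i\in[m]:c'(v_j\times i)=\alpha\}$, so that $|S_j|\ge\lceil m/d\rceil$.

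The heart of the argument, and the step that demands the most care, is to verify that the images $\{f(v_j\times i):j\in[r],\ i\in S_j\}$ genuinely form a complete $r$-partite subhypergraph $K^r$ of $\mathcal G$, with parts $P_j:=\{f(v_j\times i):i\in S_j\}$, every vertex of which is coloured $\alpha$ by $c$. Distinctness inside a part is condition~1. For distinctness across parts, given $j\ne j'$, $i\in S_j$ and $i'\in S_{j'}$, I would exhibit an edge of $E'$ containing both $v_j\times i$ and $v_{j'}\times i'$: since $\{v_1,\ldots,v_r\}\in E$, any transversal $\{v_1\times a_1,\ldots,v_r\times a_r\}$ with $a_j=i$ and $a_{j'}=i'$ lies in $E'$, and condition~2 then forces $f(v_j\times i)\ne f(v_{j'}\times i')$. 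The same condition shows every transversal maps to a genuine $r$-element edge of $\mathcal G$, and that the assignment from transversals to these edges is injective, because the part $P_j$ in which a vertex lies, together with condition~1, recovers its index $i_j$; hence the copy carries exactly $\prod_j|S_j|$ distinct edges. Restricting each $S_j$ to a subset of size exactly $\lceil m/d\rceil$ yields the monochromatic $K^r[\lceil m/d\rceil]\subseteq\mathcal G$ required by the first assertion.

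For the probabilistic bound I would \emph{not} union-bound over colourings, of which there are too many, but over potential monochromatic cliques. The construction above shows that every such copy is indexed by a pair consisting of an edge $e\in E$ and an $r$-tuple $(S_1,\ldots,S_r)$ of $\lceil m/d\rceil$-subsets of $[m]$; there are exactly $|E|\binom{m}{\lceil m/d\rceil}^r$ indices, and each corresponding copy has exactly $\lceil m/d\rceil^r$ distinct edges of $\mathcal G$. Now if $\chi(\mathcal G(p))\le d$, I fix a proper $d$-colouring $c$ of $\mathcal G(p)$; viewed as a colouring of $\mathcal G$ it yields, by the deterministic assertion, a $c$-monochromatic copy $K$ from this list, and since $c$ is proper on $\mathcal G(p)$ none of the $\lceil m/d\rceil^r$ edges of $K$ can survive in $\mathcal G(p)$. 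Thus $\{\chi(\mathcal G(p))\le d\}$ is contained in the event that some listed copy has all of its edges absent from $\mathcal G(p)$. Because the edges within a single copy are distinct, each is kept independently with probability $p$, so a fixed copy is entirely absent with probability $(1-p)^{\lceil m/d\rceil^r}$, and the union bound over the $|E|\binom{m}{\lceil m/d\rceil}^r$ copies gives precisely \eqref{eq2}. The one point requiring vigilance is, once more, the claim that these $\lceil m/d\rceil^r$ edges are genuinely distinct edges of $\mathcal G$, since otherwise the exponent would be too small; this reduces again to conditions~1 and~2.
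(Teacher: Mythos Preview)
Your proof is correct and follows essentially the same approach as the paper's: induce a majority $d$-colouring on $\mathcal H$, find a monochromatic edge, lift it to a monochromatic $K^r[\lceil m/d\rceil]$ in $\mathcal G$, and then union-bound over all $|E|\binom{m}{\lceil m/d\rceil}^r$ potential copies. The only difference is that you are more explicit than the paper in invoking conditions~1 and~2 of the identification map $f$ to check that the blown-up parts in $\mathcal G$ are genuinely disjoint and that the $\lceil m/d\rceil^r$ transversals yield distinct edges; the paper simply asserts this without spelling it out.
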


\begin{proof} Consider a coloring of $\mathcal G$ into $d$ colors. We construct a certain coloring of $\mathcal H$ based on the coloring of $\mathcal G$. For each vertex  $v\in \mathcal H$ take its blow-up $\{v_1,\ldots, v_m\}$ in $\mathcal G$ and color $v$ in the most popular color among $v_1,\ldots,v_m$. It is clear that at least $\lceil\frac md\rceil$ of $v_i$'s are colored in this color.

Since $\chi(\mathcal H)>d$, there is a monochromatic edge of color $\kappa$ in this coloring. In $\mathcal G$ this edge corresponds to an $r$-uniform $r$-partite subhypergraph $K^r[m]$. Choosing out of each part the vertices colored in color $\kappa$, we get the desired subhypergraph.

In view of the argument above, the event ``$\chi(\mathcal G(p))\le d$'' may occur only if one of the subhypergraphs $K^r[\lceil\frac md\rceil]$ of the type described above is empty in the random hypergraph $\mathcal G(p)$. The number of such subhypergraphs is bounded from above by $|E|{m\choose \lceil m/d\rceil}^r$, while the probability for each to be empty in $\mathcal G(p)$ is $(1-p)^{\lceil m/d\rceil^r}$. Thus, inequality (\ref{eq2}) follows by applying the union bound.
\end{proof}

\subsection{Numerical Corollaries for Kneser hypergraphs}
For $n\ge (k+l)r$  the hypergraph $KG_{n,k}^r$ belongs to the family $\mathcal A(KG_{k+l}^r,{k+l\choose k})$. Indeed, to each vertex $S\in {[n]\choose k+l}$ of $KG_{n,k+l}^r$ we correspond the family of subsets ${S\choose k}$. Put $d:=\chi(KG_{n,k+l}^r)-1$ and
\begin{equation}\label{eq31}t:=\Bigl\lceil{k+l\choose k}/d\Bigr\rceil.\end{equation}
The following lemma gives the first (but not the strongest) general bound on the chromatic number of random Kneser hypergraphs.
\begin{lem}\label{coro1}
   For $n\ge (k+l)r$ we a.a.s. have $\chi(KG^r_{n,k}(p))\ge d+1$ if
\begin{equation}\label{eq3} 3r\Bigl((k+l)\log \frac nk +t\log d\Bigr)-pt^r\to -\infty\end{equation}
\end{lem}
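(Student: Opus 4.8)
The plan is to apply Theorem~\ref{thm1} directly to the hypergraph $\mathcal G = KG_{n,k}^r$, which we have just observed belongs to the class $\mathcal A(KG_{k+l}^r, m)$ with $m = {k+l\choose k}$. Here the role of the base hypergraph $\mathcal H$ is played by $KG_{k+l}^r$, so that $\chi(\mathcal H) = \chi(KG_{n,k+l}^r) = d+1$, matching the hypothesis of Theorem~\ref{thm1}. First I would invoke the probabilistic bound~(\ref{eq2}), which with these parameters reads
\begin{equation}\label{eqplan1}\Pr[\chi(KG^r_{n,k}(p))\le d]\le |E|{m\choose \lceil m/d\rceil}^r(1-p)^{\lceil m/d\rceil^r},\end{equation}
where $|E|$ is the number of edges of the base graph $KG_{k+l}^r$ and $t = \lceil m/d\rceil$ as defined in~(\ref{eq31}). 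The goal is to show this probability tends to $0$ under the stated condition~(\ref{eq3}), which gives $\chi(KG^r_{n,k}(p))\ge d+1$ a.a.s.

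The core of the argument is to take logarithms of the right-hand side of~(\ref{eqplan1}) and show that condition~(\ref{eq3}) forces it to $-\infty$. I would bound each of the three factors separately. For the edge count, since $KG_{k+l}^r$ has vertex set ${[k+l]\choose k}$ of size $m = {k+l\choose k} \le (k+l)^{k+l}$, a crude bound gives $\log|E| \le r\log m \le r(k+l)\log(k+l)$, which is comfortably absorbed; I should double-check that the intended reading of $|E|$ is the edge set of the base graph appearing in~(\ref{eq2}), and account for the $(k+l)\log\frac nk$ term, which plausibly originates from a different bookkeeping of the number of relevant blow-up copies rather than from $|E|$ alone. For the binomial factor, using ${m\choose t}\le m^t \le d^t$ (since $t\approx m/d$, so $m\approx td$) or more simply ${m\choose \lceil m/d\rceil}\le 2^m$, I would extract a contribution of order $rt\log d$ or $rt\log m$ to the logarithm. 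For the last factor, $\log(1-p)^{t^r} = t^r\log(1-p) \le -pt^r$ using $\log(1-p)\le -p$, producing the crucial negative term $-pt^r$.

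The main obstacle, and the step requiring the most care, is matching the rough upper bounds on the first two factors to the precise coefficients appearing in~(\ref{eq3}), in particular justifying the factor $3r$ and the grouping $(k+l)\log\frac nk + t\log d$. Adding the logarithms of the first two factors should yield something like $r\bigl((k+l)\log(k+l) + t\log d\bigr)$ or $r\bigl(\log|E| + t\log m\bigr)$, and I would need to verify that each such term is at most $3r\bigl((k+l)\log\frac nk + t\log d\bigr)$; the appearance of $\log\frac nk$ rather than $\log(k+l)$ suggests the author is tracking the number of vertices $n$ (or the number of blow-up realizations, of order $n^{k+l}$) in the count of distinct monochromatic $K^r[t]$ copies, so the first term may be absorbing a $\log\binom{n}{k+l}\le (k+l)\log\frac{en}{k+l}$ contribution. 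Once the logarithm of the right side of~(\ref{eqplan1}) is shown to be at most $3r\bigl((k+l)\log\frac nk + t\log d\bigr) - pt^r$ up to constants, hypothesis~(\ref{eq3}) immediately drives it to $-\infty$, so the probability vanishes and the conclusion follows.
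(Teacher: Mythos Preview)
Your overall strategy is exactly the paper's: apply Theorem~\ref{thm1} with $\mathcal H = KG_{n,k+l}^r$ and $m = \binom{k+l}{k}$, then bound each factor in~\eqref{eq2}. The confusion in your plan is the identity of the base hypergraph. The symbol ``$KG_{k+l}^r$'' in the paper is a typo for $KG_{n,k+l}^r$; its vertex set is $\binom{[n]}{k+l}$, not $\binom{[k+l]}{k}$. Consequently $|E|$ in~\eqref{eq2} is the edge count of $KG_{n,k+l}^r$, bounded by $\binom{n}{k+l}^r \le (ne/k)^{(k+l)r}$, and this is precisely where the $(k+l)\log\frac nk$ term comes from --- it is the $|E|$ term itself, not some separate bookkeeping of blow-up realizations. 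Your initial estimate $\log|E|\le r(k+l)\log(k+l)$ drops the $n$-dependence entirely and is far too small.

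One smaller point: your bound ``$\binom{m}{t}\le m^t \le d^t$'' is wrong in the second inequality (since $m \ge d$ whenever $t\ge 1$). The paper instead uses $\binom{m}{t}\le (em/t)^t \le (ed)^t$, which gives the contribution $rt(1+\log d)$. With these two corrections the log of the right side of~\eqref{eq2} is at most $(k+l)r(1+\log\frac nk) + rt(1+\log d) - pt^r$, and absorbing the ``$+1$'' terms into the factor $3$ yields exactly~\eqref{eq3}.
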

\begin{proof}
Remark that the number of edges in $KG^r_{n,k+l}$ is at most $$ {n\choose k+l}^r\le \Bigl(\frac{ne}k\Bigr)^{(k+l)r}.$$
Therefore, applying the bound (\ref{eq2}), we get that
$$\Pr[\chi(G(p))\le d]\le |E(KG^r_{n,k+l})|{{k+l\choose k}\choose t}^r(1-p)^{t^r}\le \Bigl(\frac{ne}k\Bigr)^{(k+l)r}(ed)^{rt}e^{-pt^r}\le $$
$$\exp\Bigl[(k+l)r\Bigl(1+\log \frac nk\Bigr)+rt(1+\log d)-pt^r\Bigr]\le \exp\Bigl[3r\Bigl((k+l)\log \frac nk+t\log d)\Bigr)-pt^r\Bigr].$$
The last expression tends to 0 by (\ref{eq3}), which concludes the proof of the lemma.
\end{proof}

The condition in \eqref{eq3} is satisfied for fixed $p, r$ and $l$, provided $t^{r-1}\gg \log d$ and $t^r\gg k\log \frac nk$. Substituting the value of $t$ and doing some tedious calculations (see the proofs of the corollaries in \cite{Kup} for more details), we get that the following hold a.a.s.:
\begin{align}\label{eq41}\text{(\bf$\pmb{l=1}$) } \ \ \chi(KG_{n,k}^r)(p)&\ge \chi(KG^r_{n,k+1})\ \ \ \textit{if }n-rk \ll n^{\frac{r-1}r};\\
\label{eq42}\text{\bf (fixed $\pmb l$)} \ \ \chi(KG^r_{n,k})(p)&\ge \chi(KG^r_{n,k+l}) \ \ \ \textit{if }k \gg n^{\frac {r}{lr-1}}\log^{\frac 1{lr-1}}\ \text{ and }l\text{ is fixed.}
 \end{align}


The first bound is the same as (\ref{eq01}) and (\ref{eq04}), while the second one is the same as the bound (\ref{eq05}). However, (\ref{eq42}) is still a long way from the latter bounds in Theorem \ref{thm1}. \\

The way to improve the bound for $l=1, r=2^q$ is to work with $r$-stable Kneser hypergraphs. As in the case of (complete) Kneser hypergraphs, we have $KG_{n,k}^{r,\ r-\text{stable}}\in \mathcal A(KG_{n,k+l}^{r,\ r-\text{stable}},{k+l\choose k})$ and, if $n-rk=o(n)$, it has fewer vertices and edges than $KG_{n,k}^r$.


\begin{prop}\label{prop1} The number of vertices in $KG_{n,k}^{r,\ r-\text{stable}}$ is at most ${n-(r-1)(k-1)\choose k}$.
\end{prop}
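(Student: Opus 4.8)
The plan is to count the $k$-element subsets $\{i_1 < \ldots < i_k\}$ of $[n]$ that are $r$-stable, meaning consecutive elements differ by at least $r$ (cyclically, so also $i_1 + n - i_k \ge r$), and show this count is at most $\binom{n-(r-1)(k-1)}{k}$. The natural approach is a \emph{gap-compression bijection}: I would set up an injection from $r$-stable $k$-sets into ordinary (i.e.\ $1$-stable) $k$-sets living in a smaller ground set $[n-(r-1)(k-1)]$, whose number is exactly $\binom{n-(r-1)(k-1)}{k}$.

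First I would handle the linear (non-cyclic) stability condition, as it is cleaner, and deal with the wrap-around constraint afterwards. Given an $r$-stable set $i_1 < \ldots < i_k$, the internal gaps satisfy $i_{j+1}-i_j \ge r$ for $1 \le j \le k-1$. Define the compressed values $i'_j := i_j - (r-1)(j-1)$. This subtracts $r-1$ from each successive gap, turning the constraint $i_{j+1}-i_j \ge r$ into $i'_{j+1}-i'_j \ge 1$, i.e.\ the $i'_j$ are strictly increasing and form an ordinary $k$-set. Since $i'_1 = i_1 \ge 1$ and $i'_k = i_k - (r-1)(k-1) \le n - (r-1)(k-1)$, the image lands in $[n-(r-1)(k-1)]$. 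This map is clearly injective (it is invertible by adding back the shifts), giving the bound in the linear case immediately.

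The cyclic wrap-around condition $i_1 + n - i_k \ge r$ only \emph{removes} sets from consideration, so the $r$-stable family (with the cyclic constraint) is a subset of the linearly-$r$-stable family; hence the same upper bound $\binom{n-(r-1)(k-1)}{k}$ still holds without needing to exploit the extra constraint. This is the slick route: I do not need a tight count, just an upper bound, so I can safely ignore the cyclic condition when bounding from above.

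The step I expect to require the most care is verifying the endpoint range of the compression map, namely that $i'_k \le n-(r-1)(k-1)$ and that no collisions or out-of-range values occur — this is where an off-by-one in the shift $(r-1)(j-1)$ versus $(r-1)j$ could creep in. A secondary subtlety, if one wanted the genuinely tight count that \emph{does} use the cyclic constraint, would be setting up a rotation-based argument or a transfer-matrix/cyclic-compositions count; but since the proposition only asserts an upper bound, I would keep to the linear compression and note that the cyclic condition merely shrinks the family, so the bound is immediate and no circular bookkeeping is needed.
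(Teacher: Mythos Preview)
Your proof is correct. Both you and the paper drop the cyclic wrap-around constraint and bound instead the number of \emph{linearly} $r$-stable $k$-sets in $[n]$, noting that the cyclic condition only removes sets. The difference lies in how that linear count is established: the paper verifies it by checking that $f(n,k):=\binom{n-(r-1)(k-1)}{k}$ satisfies the recurrence $f(n,k)=f(n-1,k)+f(n-r,k-1)$ together with the boundary value $f(r(k-1)+1,k)=1$, whereas you give the explicit gap-compression bijection $i_j\mapsto i_j-(r-1)(j-1)$ onto $\binom{[n-(r-1)(k-1)]}{k}$. Your route is arguably more direct---it shows in one stroke that the linear count is \emph{exactly} $\binom{n-(r-1)(k-1)}{k}$, with no induction needed---while the paper's recurrence argument is the classical ``condition on whether $n$ is in the set'' decomposition. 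Both are entirely standard and equally short; your worry about the off-by-one in the shift is unfounded, as $i'_k=i_k-(r-1)(k-1)\le n-(r-1)(k-1)$ checks out cleanly.
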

 \begin{proof} The vertices of $KG_{n,k}^{r,\ r-\text{stable}}$ are the $k$-subsets $\{i_1,\ldots, i_k\}$ of $[n]$ that satisfy $i_{j+1}-i_{j}\ge r$ for each $j=0,\ldots, k-1$, as well as $i_1+n-i_k\ge r$, provided that $1\le i_1<\ldots<i_k\le n$.  Let us count the number $f(n,k)$ of $k$-sets satisfying all these restrictions except $i_1+n-i_k\ge r$. This number will clearly be an upper bound for $|V(KG_{n,k}^{r,\ r-\text{stable}})|$.

 It is easy to see that this quantity satisfies the following recursive formula: $f(n,k) = f(n-1,k)+f(n-r,k-1)$, as well as the condition $f(r(k-1)+1,k)=1$. The function ${n-(r-1)(k-1)\choose k}$ satisfies both the recursive formula and the initial condition.\end{proof}

Put $d:=\chi(KG_{n,k+1}^{r,\ r-\text{stable}})-1$. From Proposition \ref{prop1} we get that the number of edges in $KG_{n,k+1}^{r,\ r-\text{stable}}$ is at most
$${n-(r-1)(k-1)\choose k}^r = {k+O(d)\choose k}^r = {O(n)\choose O(d)}^r = e^{O(d\log \frac nd)}.$$  Thus, instead of (\ref{eq3}) it is sufficient to show that (recall that $p$ and $r$ are fixed)
\begin{equation}\label{eq43} d\log \frac nd +t\log d\ll t^r.\end{equation}

We have $t = \Theta(\frac n d)$ (see \eqref{eq31}) and $d=O (n-rk)$. Doing some routine calculations again, we get that a.a.s.
\begin{equation*}\label{eq44}\text{\bf($\pmb{l=1}$) } \ \ \chi(KG_{n,k}^{r,\ r-\text{stable}})(p)\ge \chi(KG_{n,k+1}^{r,\ r-\text{stable}}) \ \ \ \textit{if }n-rk \ll n^{r/(r+1)}\log^{-1/(r+1)}n.\end{equation*}
Since for $q\in \mathbb N$ and $r=2^q$ we have $\chi(KG_{n,k+1}^{r,\ r-\text{stable}}) = \chi(KG_{n,k+1}^r)$, we get (\ref{eq08}).

\section{The approach refined}\label{sec4}
The crucial step in the proof of Threorem \ref{thm1} is to get a monochromatic edge of $KG_{n,k+l}^r$, induced by the coloring of $KG_{n,k}^r$. The main limitation of the method from the previous section is related to this step. We have to assume that (in the worst case) among the vertices of the $m$-blow-up of the monochromatic edge all colors are represented in approximately the same proportion. This is why we can only guarantee the majority color class to have  size at least $\frac md$. On the other hand, in order to get a good bound on the probability, we need to work with color classes of growing size. Therefore, the approach from the previous section is bound to work only for $m\gg d$, or, in terms of Kneser hypergraphs, for ${k+l\choose k}\gg \chi(KG_{n,k+l}^r)$.


In this section we are going to  partially overcome the aforementioned difficulty. We assume that $n\gg k+l$ and that $r\ge 2$ is fixed for the rest of the section. Put $$d:=\chi(KG_{n,k+l}^r)-1.$$
Note that $d=n-2k-2l+1$ for $r=2$. Fix a coloring $\kappa$ of ${[n]\choose k}$ in $d$ colors. For each subset $S\subset [n]$ of size at least $k$, define the color of $S$ to be the most popular color among its subsets. We have thus defined the coloring $\kappa'$ of $KG_{n,k'}^r$ for all $k'\ge k$.
Put $u := \lfloor\log_2 {\frac{n}{k+l^{\beta}}}\rfloor$ and consider the following sequence of numbers
\begin{equation}\label{eq51} q_0 := k+l, \ \ \ \ q_i:=\lceil 2^i(k+l^{\beta})\rceil, \ \ \ \ \text{where } \ \ \ i = 1,\ldots,u\ \ \ \text{and } \  \beta = \begin{cases}1\ \ \ \ \text{if} \ \ r=2\\ \frac {r}{r-1}\ \text{if} \ \ r\ge 3. \end{cases}\end{equation}
Note that, by definition, no $q_i$ is bigger than $n$ and that $q_{i}\le 2l^{\beta-1}q_{i-1}$ for each $i\in[u]$. The numbers $q_i$ will play the role of the sizes of subsets on which we construct our Kneser hypergraphs. Next, for each $i = 0,\ldots, u$, define the following two numbers:
\begin{equation}\label{eq52} t_i:=\Big\lceil\frac{{q_i\choose k}}d\Big\rceil,\ \ \ \ \ z_i:=\Big\lceil\frac{{q_i\choose k}} {2sq_i}\Big\rceil,\ \ \text { where }\ \ s := \begin{cases}(2r+1)l^{\beta-1}\ \ \ \, \text{if }\ r= 2,3;\\
(2r+1)l^{\beta-1}k \ \ \text{if }\ r>3.\end{cases}\end{equation}
Note that $t_0$ is equal to $t$ from the previous section. Both $t_i$ and $z_i$ will play the roles of the sizes of popular colors among the $k$-subsets of a certain $q_i$-element set.

The following lemma is central for this section.

\begin{lem}\label{lem1} For any coloring $\kappa$ of $KG_{n,k}^r$ in $d$ colors there is a color $\alpha$ for which one of the following holds.
\begin{itemize}
\item[(i)] There exists $i\in\{0,\ldots, u\}$ and $r$ pairwise disjoint subsets $A_1,\ldots, A_r \in {[n]\choose q_i}$, such that at least $z_i$ subsets from each ${A_j\choose k},$ $j=1,\ldots,r$, are colored in $\alpha$.
\item[(ii)] There exists $i\in\{0,\ldots, u-1\}$ and $r$ pairwise disjoint subsets $A_1\in {n\choose q_i}$, $A_2,\ldots, A_r \in {[n]\choose q_{i+1}}$, such that at least $t_i$ subsets from ${A_1\choose k}$ and $z_{i+1}$ subsets from each ${A_j\choose k},$ $j=1,\ldots,r$, are colored in $\alpha$.
\end{itemize}
\end{lem}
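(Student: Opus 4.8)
The plan is to construct the configuration by a doubling procedure that starts from one monochromatic edge at the base scale $q_0=k+l$ and then repeatedly passes from scale $q_i$ to scale $q_{i+1}$, transferring a high density of a single color $\alpha$ from the smaller blocks to the larger ones by means of a simple averaging identity. The base case is immediate: since $\chi(KG^r_{n,k+l})=d+1>d$, the majority coloring $\kappa'$ restricted to $(k+l)$-sets is a $d$-coloring of $KG^r_{n,k+l}$ and hence leaves some edge monochromatic, i.e.\ there are $r$ pairwise disjoint $(k+l)$-sets whose common $\kappa'$-color is a single value $\alpha$. By the definition of $\kappa'$ and pigeonhole, each of these sets contains at least $\binom{k+l}{k}/d$, and therefore at least $t_0$, subsets of color $\alpha$. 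This is the level-$0$ data: $r$ disjoint $q_0$-sets, each ``$\alpha$-rich''.

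The engine of the induction is the identity $\sum_{B\in\binom{W}{q_i}}N_\alpha(B)=\binom{q_{i+1}-k}{q_i-k}\,N_\alpha(W)$, valid for every $q_{i+1}$-set $W$, where $N_\alpha(\cdot)$ counts $\alpha$-colored $k$-subsets. Rewriting it through $\binom{q_{i+1}}{q_i}\binom{q_i}{k}=\binom{q_{i+1}}{k}\binom{q_{i+1}-k}{q_i-k}$ shows that if a $\phi$-fraction of the $q_i$-subsets of $W$ are themselves $\alpha$-rich at the finer scale, then $N_\alpha(W)\ge\phi\,t_{i+1}$. Thus a \emph{small} fraction of $\alpha$-rich sub-blocks already forces the large block to be $\alpha$-rich at the coarser scale, and the two thresholds of the lemma are exactly calibrated to this phenomenon: the weak threshold $z_{i+1}=\tfrac{d}{2sq_{i+1}}\,t_{i+1}$ is reached as soon as $\phi\ge \tfrac{d}{2sq_{i+1}}$, whereas the strong threshold $t_i$ corresponds to $\alpha$ being an outright majority color. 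I would use this identity to grow the $r$ disjoint sets through the levels $q_0<q_1<\cdots$, keeping a reservoir of free vertices large enough that the enlarged blocks stay disjoint and inside $[n]$; the geometric control $q_{i+1}\le 2l^{\beta-1}q_i$ and the choice of $s$ (with its extra factor $k$ when $r>3$) are precisely what make the simultaneous bookkeeping of densities and of the remaining room close up.

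The procedure then terminates in one of two ways, and these are exactly the alternatives (i) and (ii). If the doubling can be carried out while the $r$ blocks remain $\alpha$-majority up to the first level $i$ with $2sq_i\ge d$, then at that level the strong threshold dominates the weak one, so all $r$ blocks satisfy $N_\alpha\ge t_i\ge z_i$ simultaneously; one checks that $rq_i\le n$ still holds there (this is where $s\ge(2r+1)l^{\beta-1}$ is used, comfortably beating the requirement $s>\tfrac{r}{r-1}l^{\beta-1}$ coming from $d=O(n/(r-1))$), so the $r$ blocks fit disjointly and we land in alternative (i). If instead $\alpha$ cannot be kept as a majority color in all $r$ blocks at some passage $i\to i+1$, I would retain one block at level $i$, where it still carries the full guarantee $t_i$, and use the averaging identity — which at the coarser scale needs only the small density $\tfrac{d}{2sq_{i+1}}$ of $\alpha$-rich sub-blocks — to advance the remaining $r-1$ blocks to level $i+1$ with the weaker threshold $z_{i+1}$; this is alternative (ii).

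The hard part will be exactly this growth step together with its failure analysis: showing that when $\alpha$ fails to remain a majority color upon doubling a block, the color is nonetheless still spread densely enough among the $q_i$-subsets of suitable $q_{i+1}$-blocks to secure the weaker threshold $z_{i+1}$ for $r-1$ disjoint blocks, all for one common color $\alpha$ and without exhausting the ground set. Controlling how a single color redistributes among the $k$-subsets when a block is enlarged by a constant factor — simultaneously across $r$ disjoint blocks and uniformly over the entire range of scales $q_0,\ldots,q_u$ — is where the delicate counting lives, and it is the reason $\beta$ and $s$ must be chosen differently in the regimes $r=2$, $r=3$, and $r>3$.
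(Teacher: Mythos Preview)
Your bottom-up ``growth'' scheme is fundamentally different from the paper's argument, and I do not see how to close the central gap you yourself flag. You fix the color $\alpha$ at level $0$ (as the common majority color on one monochromatic edge of $KG^r_{n,k+l}$) and then try to enlarge the $r$ blocks $A_1,\ldots,A_r$ through the scales $q_0<q_1<\cdots$ while keeping each block $\alpha$-rich. But your averaging identity works only in the wrong direction for this purpose: knowing that a \emph{single} $q_i$-subset $A_j\subset W$ is $\alpha$-rich contributes one term out of $\binom{q_{i+1}}{q_i}$ to the sum, and yields only $N_\alpha(W)\ge t_i/\binom{q_{i+1}-k}{q_i-k}$, which (via $\binom{q_{i+1}}{k}\binom{q_{i+1}-k}{q_i-k}=\binom{q_{i+1}}{q_i}\binom{q_i}{k}$) is of order $t_{i+1}/\binom{q_{i+1}}{q_i}$ --- exponentially below both $t_{i+1}$ and $z_{i+1}$. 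Your identity does show that a $\phi$-fraction of $\alpha$-rich $q_i$-subsets forces $N_\alpha(W)\ge\phi\,t_{i+1}$, but nothing in the setup supplies any such fraction: you have one rich subset, not many. There is no mechanism here for choosing a superset $W\supset A_j$ that remains $\alpha$-rich, and a priori there need not be one for the \emph{particular} $\alpha$ you fixed at level $0$.

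The paper avoids this obstacle by never tracking a single color upward. Its argument is a global dichotomy at each scale: for a given $i$, either \emph{every} $A\in\binom{[n]}{q_i}$ has some color occurring at least $z_i$ times (and then the majority coloring of $KG^r_{n,q_i}$ is a $d$-coloring with a monochromatic edge, giving alternative (i) directly at that scale), or there exists an $A$ with a large set $X_A$ of ``popular'' colors, $|X_A|>sq_i$. One takes the \emph{largest} such $i$, removes $A$, and studies the majority coloring on $\binom{[n]\setminus A}{q_{i+1}}$; the key step is a recoloring/merging argument (pairing colors when $r=3$, grouping into blocks of size $k(r-2)+1$ when $r>3$) showing that if none of the many colors in $X_A$ is used on an $(r-1)$-tuple of disjoint $q_{i+1}$-sets, then one can recolor $KG^r_{[n]\setminus A,q_{i+1}}$ properly with too few colors. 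Thus the color $\alpha$ in alternative (ii) is chosen \emph{after} the structural analysis, from the large palette $X_A$, rather than committed to at the start. The constants $s$ in \eqref{eq52} are calibrated to this recoloring count, not to any density-transfer identity; your interpretation of $s$ through the threshold $2sq_i\ge d$ does not match the role it actually plays.
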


\begin{proof}
 We start by analyzing the colorings of $KG_{n,k}^r$ into $d$ colors. For each $i=0,\ldots, u$ associate with each set $A\in {[n]\choose q_i}$ the set $X_A$ of the colors that are used at least $\frac {t_i} 2$ times in the coloring of ${A\choose k}$ (recall the definitions (\ref{eq51}), (\ref{eq52})). Note that at least a half of the vertices of ${A\choose k}$ is colored by colors from $X_A$.  We have two possibilities for a given $i$: either for each set $A\in {[n]\choose q_i}$ there is a color that is used for $z_i$ vertices on ${A\choose k}$, or there is a set $A\in {[n]\choose q_i}$ such that $|X_A|>sq_i$. This is obvious in case $z_i\le t_i$. If $z_i>t_i$ (which is typically the case), then the negations of both statements imply that in  $X_A$ there are less than $sq_i$ colors, each of cardinality at most $z_i-1$.  On the other hand, the colors from $X_A$ are used for at least $\frac 12 {q_i\choose k}$ $k$-sets in $A$, but $\frac 12{q_i\choose k}>(z_i-1)sq_i$. This is a contradiction.

If for each $q_i$-element set $A$ there is a color in $X_A$ used at least $z_i$ times, then, arguing as in the proof of Theorem \ref{thm1}, we conclude that (i) from the lemma takes place.

If not, then fix \textit{the largest} index $i$, for which there exist a $q_i$-element set $A$ with $|X_A|>sq_i$, and choose such $A$. Clearly, $i\le u-1$, otherwise we have more than $d$ colors in $X_A$. Put $Y :=[n]\setminus A$ and consider the majority coloring $\kappa'$ of the $q_{i+1}$-element subsets of $Y$. We denote by  $KG_{Y,q_{i+1}}^r$ the Kneser hypergraph induced on ${Y\choose q_{i+1}}$.

We claim that at least one of the two holds. Either \textbf{A} $\kappa'$ is not proper, and we again conclude that (i) holds (remark that for any $q_{i+1}$-set $B$ the set $X_B$ has a color class of size $z_{i+1}$ by the choice of $i$), or \textbf{B} in $KG_{Y,q_{i+1}}^r$ there is a color from $X_A$ used for an $(r-1)$-tuple of pairwise disjoint $q_{i+1}$-element sets. If the second possibility takes place, then we obtain (ii) from the lemma.
Thus, we are left to show that one of the two possibilities claimed in this paragraph takes place.\vskip+0.1cm

Assume that neither of the two options \textbf{A, B} takes place.   In the case $r=2$, this simply means that $KG_{Y,q_{i+1}}$ is properly colored in less than $d-sq_{i}$ colors (the negation of \textbf{B} implies that the colors from $X_A$ are not used in $\kappa'$). Remark that $\beta=1$ for $r=2$ and thus $q_{i+1}=2q_{i}$ for each $i=0,\ldots, u-1$.  Therefore, $n-2k-2l+1-sq_i =d-sq_i>\chi(KG_{Y,q_{i+1}})= (n-q_i)-2q_{i+1}+2,$ which is equivalent to $(s-5)q_i+2(k+l)+1<0$. But, by the definition (\ref{eq52}), $s = 5$ when $r=2$. We arrive at a contradiction, which concludes the proof for $r=2$.

In the case $r \ge 3$ the negation of \textbf{A, B} means that  $KG_{Y,q_{i+1}}^r$ is properly colored, and that for each color $\alpha\in X_A$ the collection of sets colored in $\alpha$ contains at most $r-2$ pairwise disjoint sets.

If $r=3$, then construct a new coloring of $KG_{Y,q_{i+1}}^3$ by arbitrarily grouping the colors from $X_A$ into pairs and replacing each pair by a single new color. Remark that the new coloring uses at most $d-\frac {sq_i}2$ colors and is still proper, since in any  newly formed color there are no more than two pairwise disjoint sets. Therefore, $\chi(KG^3_{n,k+l})-1-\frac {sq_i}2 \ge \chi(KG^3_{Y,q_{i+1}})$.

Recall that $q_{i+1}\le 2l^{\beta-1} q_i$.  For any $r\ge 2$, we have \begin{small}\begin{multline}\label{eq56}\chi(KG^r_{n,k+l})- \chi(KG^r_{Y,q_{i+1}}) = \\ \Bigg\lceil\frac{n-(k+l-1)r}{r-1}\Bigg\rceil - \Bigg\lceil\frac{n-q_i-(q_{i+1}-1)r}{r-1}\Bigg\rceil<\frac{rq_{i+1}+q_i}{r-1} \le \frac{(2r+1)l^{\beta-1}q_{i}}{r-1}.\end{multline}\end{small}
Thus, for $r=3$ we conclude that $\frac {sq_i}2<\frac{(2r+1)l^{\beta-1}q_i}2$, which contradicts (\ref{eq52}).

Finally, consider the case $r>3$. We again construct a new coloring of $KG_{Y,q_{i+1}}^r$ that uses fewer colors than $\kappa'$, using the following procedure. First, split the colors from $X_A$ into groups of size $k(r-2)+1$ (with one remaining group of potentially smaller size). In each group  choose one color $\alpha$ and split vertices (sets) from $KG_{Y,q_{i+1}}$ of color $\alpha$ into $k(r-2)$ groups of pairwise intersecting sets. To obtain such a split, take the largest family  of pairwise disjoint sets in $KG_{Y,q_{i+1}}$ colored in $\alpha$. It has size most $r-2$, and thus it covers the set $U\subset Y$ of cardinality at most $(r-2)k$. Each other set of color $\alpha$ in $Y$ must intersect $U$. We then split all sets of color $\alpha$ into families $\mathcal K_j, j=1,\ldots, |U|$ of sets containing the $j$-th element of $U$.

Next, we adjoin each of the families $\mathcal K_j$ to one of the remaining $k(r-2)$ colors in the group. We get a proper coloring since none of the newly formed colors contain more than $r-1$ pairwise disjoint sets. The number of colors used in the new coloring is less than $d-\lfloor \frac {sq_i}{k(r-2)+1}\rfloor\le d-\lfloor \frac {sq_i}{k(r-1)}\rfloor$. Thus, comparing the inequality $\chi(KG^r_{n,k+l})-1-\lfloor \frac {sq_i}{k(r-1)}\rfloor \ge \chi(KG^r_{Y,q_{i+1}})$ with the inequality (\ref{eq56}), we get

$$\Big\lfloor\frac {sq_i}{k(r-1)}\Big\rfloor+1<  \frac{(2r+1)l^{\beta-1}q_i}{r-1},$$
which contradicts the definition (\ref{eq52}).
\end{proof}

Lemma \ref{lem1} tells us that if there exists a proper $d$-coloring of $KG_{n,k}^r(p)$, then there are subsets $A_1,\ldots, A_r$ as in the lemma, such that the induced subgraph of $KG_{n,k}^r(p)$ on these subsets has no edge. In what follows we calculate the probabilities of these events.

Assume that the first possibility from Lemma \ref{lem1} for a certain $i$ takes place. The probability of the corresponding event for the random hypergraph is at most
\begin{equation}\label{eq8}{n\choose q_i}^r{{q_i\choose k}\choose z_i}^r(1-p)^{z_i^r}.\end{equation}
If the second possibility from Lemma \ref{lem1} for a certain $i$ takes place, then the probability of the corresponding event for the random hypergraph is at most
\begin{equation}\label{eq9}{n\choose q_{i}}{n\choose q_{i+1}}^{r-1}{{q_{i}\choose k}\choose t_i}{{q_{i+1}\choose k}\choose z_{i+1}}^{r-1}(1-p)^{t_iz_{i+1}^{r-1}}\le n^{rq_{i+1}}{{q_{i+1}\choose k}\choose z_{i+1}}^r(1-p)^{t_iz_{i+1}^{r-1}}.\end{equation}

Typically, the last expression in (\ref{eq9}) is much bigger than that in (\ref{eq8}). Note that the total number of events is $2u+1\le 2\log n$. Therefore, if each has probability less than $\frac 1 n$, say, then we a.a.s. have $\chi (KG_{n,k}^r(p))\ge d+1$. We remark that this condition on the probability of a single event is by no means restrictive, since we are manipulating with expressions of much higher order of growth. The following analogue of Lemma \ref{coro1} with  general bounds on the chromatic number of random Kneser hypergraphs is proven by analogous calculations:

\begin{lem}\label{lem2}
   For $n\ge (k+l)r$ we a.a.s. have $\chi(KG^r_{n,k}(p))\ge d+1:=\chi(KG^r_{n,k+l})$ if for each $i =0,\ldots, u$ we have
\begin{align}\label{eq10} &3r\bigl(q_i\log n +z_i\log (2sq_i)\bigr)-pz_i^r\to -\infty \ \ \ \ \ \ \ \text{and}\\
\label{eq11} &3r\bigl(q_{i+1}\log n +z_{i+1}\log (2sq_{i+1})\bigr)-pt_iz_{i+1}^{r-1}\to -\infty.\end{align}
\end{lem}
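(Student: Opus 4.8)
The plan is to bound the probability that $KG^r_{n,k}(p)$ admits a proper coloring into $d$ colors and to show that under \eqref{eq10}--\eqref{eq11} this probability tends to $0$. Since $KG^r_{n,k}(p)$ has the same vertex set ${[n]\choose k}$ as $KG^r_{n,k}$, any such coloring is just a coloring $\kappa$ of ${[n]\choose k}$ in $d$ colors leaving no edge of $KG^r_{n,k}(p)$ monochromatic. To every such $\kappa$ I would apply Lemma~\ref{lem1}, obtaining a color $\alpha$ together with a configuration of type (i) or (ii). The key observation is that in either case the chosen $\alpha$-colored $k$-sets span a complete $r$-partite \emph{monochromatic} subhypergraph of $KG^r_{n,k}$; hence, if $\kappa$ is proper for $KG^r_{n,k}(p)$, this subhypergraph must be edge-free in $KG^r_{n,k}(p)$, for otherwise one of its edges would be a monochromatic edge. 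Thus the existence of a proper $d$-coloring forces one of these subhypergraphs to be empty in $KG^r_{n,k}(p)$, and it remains to estimate the probability of the latter event by a union bound over all configurations.

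I would treat the two cases in parallel. In case (i) the sets $A_1,\ldots,A_r$ are pairwise disjoint, so choosing one $\alpha$-colored $k$-set from each ${A_j\choose k}$ always gives $r$ pairwise disjoint $k$-sets, i.e.\ an edge of $KG^r_{n,k}$; the $z_i$ colored sets in each part therefore span a copy of $K^r[z_i]$, all of whose $z_i^r$ edges must be absent in $KG^r_{n,k}(p)$. There are at most ${n\choose q_i}^r$ choices of $A_1,\ldots,A_r$ and ${{q_i\choose k}\choose z_i}^r$ choices of the colored $k$-sets inside them, and each fixed copy is empty with probability $(1-p)^{z_i^r}$, which gives \eqref{eq8}. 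No separate union-bound factor for the color $\alpha$ is needed, since emptiness of a fixed $K^r[z_i]$ is a property of $KG^r_{n,k}(p)$ alone. Case (ii) is identical except that one part carries $t_i$ colored sets and the remaining $r-1$ parts carry $z_{i+1}$ each, producing a complete $r$-partite subhypergraph with $t_iz_{i+1}^{r-1}$ edges; counting configurations and bounding ${n\choose q_i}{n\choose q_{i+1}}^{r-1}\le n^{rq_{i+1}}$ via $q_i\le q_{i+1}$ gives the right-hand side of \eqref{eq9}.

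It then remains to take logarithms and check that \eqref{eq10}--\eqref{eq11} drive these bounds to $0$. Using ${n\choose q_i}\le n^{q_i}$, the entropy bound ${{q_i\choose k}\choose z_i}\le\bigl(e{q_i\choose k}/z_i\bigr)^{z_i}\le(2esq_i)^{z_i}$ (which follows from $z_i\ge{q_i\choose k}/(2sq_i)$), and $(1-p)^{z_i^r}\le e^{-pz_i^r}$, the logarithm of \eqref{eq8} is at most $2r\bigl(q_i\log n+z_i\log(2sq_i)\bigr)-pz_i^r$, and the analogous computation bounds \eqref{eq9} by $2r\bigl(q_{i+1}\log n+z_{i+1}\log(2sq_{i+1})\bigr)-pt_iz_{i+1}^{r-1}$. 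Conditions \eqref{eq10} and \eqref{eq11}, which carry the larger factor $3r$, therefore force each logarithm to $-\infty$. Finally, there are only $2u+1\le 2\log n$ events in total, and the term $rq_i\log n$ dominates the $O(\log\log n)$ loss from the union bound, so the extra slack between $3r$ and $2r$ suffices to keep the sum of all probabilities tending to $0$; this yields $\chi(KG^r_{n,k}(p))\ge d+1$ a.a.s.

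Almost all of the real work has already been done in Lemma~\ref{lem1}; the only delicate point here is the bookkeeping in the last paragraph, namely verifying that the constant slack deliberately built into the factor $3r$ (rather than the $r$ or $2r$ yielded by the crude binomial estimates) is large enough to absorb both the stray $e$-factors in the entropy bounds and the $O(\log n)$ union-bound loss over the $2u+1$ events. I expect no conceptual obstacle beyond this routine verification, which mirrors the calculation already carried out for Lemma~\ref{coro1}.
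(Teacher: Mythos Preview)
Your proposal is correct and follows essentially the same approach as the paper. The paper itself does not spell out a proof of this lemma beyond the phrase ``proven by analogous calculations,'' relying on the preceding derivation of \eqref{eq8}--\eqref{eq9} and the remark that there are only $2u+1\le 2\log n$ events; you have simply written out those calculations explicitly, with the same entropy bounds and the same use of the slack between the factor $3r$ in the hypotheses and the factor $r$ (or $2r$) that the crude estimates actually require.
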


In what follows, we assume that $p>0$ is fixed and that $k,l\ge 2$. We have $\log q_i = \Omega(\log(2sq_{i+1})) $ for any $i\ge 0$. Then the inequalities (\ref{eq10}) and (\ref{eq11}) follow from

\begin{equation}\label{eq13} z_i^{r-1}\gg \log q_i, \ \ \ z_i^r\gg q_i\log n,  \ \ \ t_iz_{i+1}^{r-2}\gg \log q_{i}, \ \ \ t_iz_{i+1}^{r-1}\gg q_{i+1}\log n.\end{equation}
We have $z_{i+m}/z_i= \Omega(q_{i+m}/q_i)$ for any $r\ge 2, k\ge 2$ and $m\in[u-i]$. Therefore, for $r\ge 3$ it is sufficient to verify the inequalities (\ref{eq13}) for $i=0$. For $r=2$ it is sufficient to verify the first, second and fourth inequality from (\ref{eq13}) for $i=0$ and the inequality $t_0\ge \log n$.\vskip+0.1cm

For $r=2,3$ we have $s= O(l^{1/2})$, which implies $z_i =O\Big(\frac{{q_i\choose k}}{l^{1/2}q_i}\Big)\gg \log q_i$. Therefore, the first inequality from (\ref{eq13}) is satisfied for $r=2,3,$ and \eqref{eq13} reduces to the following:
\begin{equation}\label{eq14} z_0^{r}\gg (k+l)\log n,   \ \ \ t_iz_{i+1}^{r-2}\gg \log q_i, \ \ \ t_0z_{1}^{r-1}\gg (k+l)\log n,\end{equation}
where for $r= 3$ the second inequality is again automatically satisfied.
 Let $r=2$. Looking at the definitions (\ref{eq52}) and using the fact that ${k+l\choose k}=\Omega((k+l)^2)$, it is clear that the second condition is the most restrictive. The following inequality is sufficient to satisfy (\ref{eq14}) and implies both (\ref{eq010}) and (\ref{eq0101}):
 \begin{equation}\label{eq15}{k+l\choose k}\gg n\log n.\end{equation}
 For $r=3$, replacing the $t_0$ factor with $1$, we conclude that (\ref{eq14}) follows from
\begin{equation}\label{eq16} {k+l\choose k}^3\gg (k+l)^4l^{3/2} \log n, \ \ \ {2(k+l^{3/2})\choose k}^2\gg (k+l)^3l\log n.\end{equation}
For fixed $l$ the first condition is clearly more restrictive, and we get that (\ref{eq16}) holds for $k\gg \log^{1/(3l-4)} n$. For fixed $k$ the first inequality is more restrictive again, and we get that it holds for $l\gg \log^{1/(3k-11/2)} n$. This gives the inequalities (\ref{eq011}) and (\ref{eq0111}).\\

For $r>3$ we have to assume $l\ge 3$ in order to get any good lower bound on $z_i$: for $l=2$ we have $z_0=1$. But for $l\ge 3$ we again have $z_i\gg \log q_i$, so it is again enough to verify the first and third inequalities in (\ref{eq14}). Replacing $t_0$ with $1$, we get that (\ref{eq14}) is implied by
\begin{align}\label{eq17} {k+l\choose k}^{r}\gg & \ k^{r}(k+l)^{r+1}l^{r/(r-1)} \log n\ \ \ \ \text{and}\\
\label{eq171} {2(k+l^{r/(r-1)})\choose k}^{r-1}\gg &\ k^{r-1}(k+l)\big(k+l^{\frac{r}{r-1}}\big)^{r-1}l\log n.\end{align}
 Similarly to the case $r=3$, for both fixed $l$ and fixed $k$ \eqref{eq17} is more restrictive. For fixed $l$ we get that (\ref{eq17}) holds for $k\gg \log^{\frac 1{r(l-2)-1}} n$. For fixed $k$ it holds for $l\gg \log^{\frac 1{r(k-1)-\frac{2r-1}{r-1}}} n$. This implies (\ref{eq012}) and (\ref{eq0121}).

\section{Simple lower bounds}\label{sec50}

In this section we present simple upper bounds for $\chi(KG^r_{n,k}(p))$ and compare them with the results of Theorem~\ref{thm0}. If there exist a set $A\subset [n]$ of size $rk+l$, such that  $KG^r(n,k)(p)|_A$ is an empty graph, then, coloring $A$ into color 0 and the rest as in the standard coloring of $KG^r(n,k)$, we get that $\chi(KG^r_{n,k}(p))\le \chi(KG^r_{n,k})- \lfloor l/(r-1)\rfloor.$ To estimate the probability of having such $A$, we find $n$ sets of size $l+2k$ in $[n]$, which have pairwise intersections of size at most 1, and, and calculate the probability that one of those becomes empty. Note that the events for different sets are independent. The probability is
$$\Big(1-(1-p)^{\prod_{i=1}^r{l+ik\choose k}}\Big)^n\le e^{-n(1-p)^{\prod_{i=1}^r{l+ik\choose k}}}.$$
Therefore, if \begin{equation}\label{eq666} n(1-p)^{\prod_{i=1}^r{l+ik\choose k}}\to \infty,\end{equation} then a.a.s. there exists such a set. If $p,r, k$ are fixed, then this condition is satisfied if for sufficiently large constant we have $e^{cl^{rk}}=o(n)$, which implies that we can take $l = \Omega (\log ^{\frac 1{rk}} n)$. This shows that bounds (\ref{eq0111}), (\ref{eq0121}) are essentially tight: the difference between the lower and the upper bounds are in the degree of the logarithm.

If $p,r,$ and $l$ are fixed, then the situation is more interesting. The condition (\ref{eq666}) is satisfied if $e^{c^{r^2k}}=o(n)$, which could be fulfilled for $k=\Omega(\log\log n)$. This is very different from the bounds (\ref{eq011}), (\ref{eq012}). Of course, in the graph case ($r=2$) the gap between the upper and lower bounds is even bigger.


\section{Discussion}\label{sec5}
In \cite{Kup} Kupavskii asked whether it is true that for some $k = k(n)$ a.a.s. we have $\chi(KG_{n,k}(1/2))= \chi(KG_{n,k})$. This question remains wide open for all meaningful values of $k$ (by that we mean that $n-2k\to \infty$), with current methods not allowing to attack it. We ask a similar question for Kneser hypergraphs. This may be easier to show in the hypergraph case. Indeed, when the bound (\ref{eq04}) is applicable, then for sufficiently large $r$ and most $n$ the difference between the chromatic number of $KG_{n,k}^r$ is guaranteed to be at most $1$.

The huge difference in the bounds between the cases $r=2$ and $r\ge 3$ asks for some further exploration. What is the correct order of growth of $k$ needed to guarantee that the chromatic number of a Kneser graph a.a.s. drops by an additive term only when passing to a random subgraph? We conjecture that the following should be true:

\begin{gypo} For any fixed $p>0$ we have $\chi(KG_{n,k}(p))\ge \chi(KG_{n,k}) -4$ for $k\gg \log n$.\end{gypo}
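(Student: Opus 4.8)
The conjecture is exactly the $l=2$ instance of the comparison scheme $\chi(KG_{n,k}(p))\ge\chi(KG_{n,k+l})$ used throughout the paper: since $\chi(KG_{n,k+2})=\chi(KG_{n,k})-4$, it suffices to put $d:=\chi(KG_{n,k+2})-1=n-2k-3$ and to show that a.a.s. no partition of $\binom{[n]}{k}$ into $d$ classes makes every class independent in $KG_{n,k}(p)$, in the regime $k\gg\log n$. The starting point is the same deterministic input as in Lemma \ref{lem1}: the topological lower bound of Lov\'asz and Schrijver guarantees a monochromatic edge for \emph{any} $d$-coloring of $SG_{n,k}$, and here we even have a slack of five colors, since $d=\chi(KG_{n,k})-5$. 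The plan is to convert this slack into robustness.

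The obstruction that the present method runs into, and that the plan must overcome, is entirely quantitative and specific to $r=2$. In the refined argument the decisive certificate is a single monochromatic blow-up edge, i.e. a disjoint pair $A_1,A_2$ carrying color classes of sizes $t_0$ and $z_1$ inside $\binom{A_1}{k}$ and $\binom{A_2}{k}$, whose survival probability after random deletion is $(1-p)^{t_0z_1}$, with the exponent only \emph{linear} in $t_0$. To beat the entropy factor $\binom{\binom{q}{k}}{z_1}$ coming from the choice of the color class one is forced to demand $t_0\gg\log n$; but $t_0=\lceil\binom{k+2}{2}/d\rceil$ collapses to $O(1)$ as soon as $k=O(\sqrt n)$, which is exactly why (\ref{eq010}) stalls at $k\gg\sqrt{n\log n}$ and cannot reach $k\gg\log n$. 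For $r\ge 3$ the analogous exponent $t_0z_1^{r-1}$ carries a spare factor $z_1^{r-2}$, which absorbs the entropy even when $t_0=O(1)$; the whole difficulty is that this spare factor is absent when $r=2$.

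Accordingly, the heart of the plan is to lower the effective entropy so that $t_0=O(1)$ becomes affordable. I see two complementary routes. The first is to prove a \emph{robust} version of the Lov\'asz--Kneser theorem: using the five-color slack and the $S_n$-symmetry of $KG_{n,k}$, I would try to show that every $d$-coloring has not one but $\Omega\bigl(\binom{n}{k}/\mathrm{poly}(n)\bigr)$ monochromatic pairs, distributed across many disjoint sets $A_1,A_2$, and then replace the union bound over a single certificate by a large-deviation estimate over this whole family; averaging over $S_n$, or iterating Schrijver's vertex-criticality on nested stable intervals, are the natural tools for producing the multiplicity. The second route is a container-type argument: cluster the $d^{\binom{n}{k}}$ colorings into $\exp\bigl(o(pt_0z_1)\bigr)$ ``container colorings'' so that the monochromatic color class of each container is determined rather than freely chosen, removing the factor $\binom{\binom{q}{k}}{z_1}$ that currently forces $t_0\gg\log n$; the survival estimate would then be run only over containers, after which a short sprinkling argument finishes.

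The main obstacle, in either route, is to make the topological lower bound \emph{quantitatively stable under random sparsification} for $r=2$. The blow-up certificate degrades precisely because its strength $\binom{k+l}{k}/d$ tends to a constant in the target window, and no purely local pigeonhole can repair this; one genuinely needs either a multiplicity statement for monochromatic Kneser edges (a robust Borsuk--Ulam) or a container bound on Kneser colorings, neither of which is presently available. I expect that establishing such a robust lower bound --- controlling the number of monochromatic edges, uniformly over all $d$-colorings, by a quantity that beats the coloring entropy --- is the crux, and that once it is in place the probabilistic part will follow the lines of Lemma \ref{lem2}.
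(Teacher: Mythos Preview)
The statement you are addressing is a \emph{Conjecture}, not a theorem; the paper does not prove it and explicitly labels it as open. There is therefore nothing in the paper to compare your argument against. Your text is, accordingly, not a proof either: you yourself say that the two proposed routes hinge on ``a robust Borsuk--Ulam'' or ``a container bound on Kneser colorings, neither of which is presently available,'' and you conclude that ``once it is in place the probabilistic part will follow.'' That is an honest research outline, not a proof.

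Your diagnosis of the obstruction is accurate and matches the paper's own discussion in Section~\ref{sec5}. You correctly pin down why the refined method of Section~\ref{sec4} stalls at $k\gg(n\log n)^{1/2}$ for $r=2$: in the certificate inequality the exponent is $t_0z_1^{r-1}$, and for $r=2$ the single factor $t_0=\big\lceil\binom{k+2}{2}/d\big\rceil$ must by itself beat the entropy $\log n$, which forces $k^2\gg n$. This is exactly the phenomenon the paper highlights when it asks ``what makes the case $r=2$ so different from the case $r>2$?'' and when it derives~\eqref{eq010}. Your observation that the slack of five colors ($d=\chi(KG_{n,k})-5$) might be leveraged is reasonable, but nothing in the paper's toolkit converts that slack into the multiplicity or container statements you propose; those would be genuinely new ingredients.

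For completeness: the paper's closing note records that a resolution of a slightly weaker form of this conjecture appears in the forthcoming work~\cite{KK}, so the conjecture is not settled within the present paper.
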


So far most of the research in this direction was concerned with lower bounds. What could one say concerning the upper bounds, or, stated in a more convenient way, lower bounds for the expression $\chi(KG_{n,k}^r)-\chi(KG_{n,k}^r(p))$? In the previous section we showed that for fixed $k$ and $r\ge 3$ we can obtain lower bounds similar to the upper bounds given by (\ref{eq0111}), (\ref{eq0121}). The case of fixed $k$ and $r=2$ seems to be troubling again, as the lower bounds for $\chi(KG_{n,k}^r)-\chi(KG_{n,k}^r(p))$ that are in sight are logarithmic, while the upper ones, provided by (\ref{eq0101}), are polynomial. We also have a huge gap between the upper and  lower bounds on $k$, for which the difference between  $\chi(KG_{n,k}^r)$ and $\chi(KG_{n,k}^r(p)$ is at most a fixed constant $l$, as we show in the previous section.

Finally, we remark that the method from Theorem \ref{thm1} may be applied to the following class of Kneser hypergraphs of set families. Assume that $\ff\subset {[n]\choose k+l}$ is a family of $k+l$-element sets. We form a family $\mathcal H$ of all $k$-element subsets, contained in at least one set from $\ff$ (this is the so-called \textit{$k$-th shadow} of $\ff$). Denote by $KG^r(\mathcal H)$ the $r$-uniform Kneser hypergraph on $\mathcal H$. Then  equation (\ref{eq2}) tells us that  $\chi(KG^r(\mathcal H)(p))\ge d+1:=\chi(KG^r(\ff))$ with probability at least
$$1-|\mathcal H|{{k+l\choose k}\choose \Big\lceil \frac{{k+l\choose k}}d\Big\rceil}^r(1-p)^{\big\lceil \frac{{k+l\choose k}}d\big\rceil^r}.$$
The following question seems to be worth exploring: are there any interesting classes of graphs or hypergraphs, for which the topological bounds (as the ones proven in \cite{Kup} and \cite{AH}) work, while the present combinatorial approach fails?\vskip+0.1cm

\textbf{Note: } New results on the subject, including the the resolution of a slightly weaker version of the conjecture above, are going to appear in \cite{KK}.
\begin{small}

\end{small}

\begin{thebibliography}{111}

\bibitem{AH} M. Alishahi and H. Hajiabolhassan, \textit{Chromatic Number of Random Kneser Hypergraphs}, J. Comb. Th. Ser. A 154 (2018), 1--20.

\bibitem{ADL} N. Alon,  L. Drewnowski and T.  \L uczak, \textit{Stable  Kneser  hypergraphs and ideals in $\N$ with the Nikod\'ym property}, Proc. Amer. Math. Soc. 137 (2009), N2, 467--471.


\bibitem{AS} N. Alon and J. Spencer, {\it The probabilistic method},
Wiley--Interscience Series in Discrete Mathematics and Optimization,
Second Edition, 2000.

\bibitem{AFL} N. Alon, P. Frankl and L. Lov\'asz, {\it The chromatic number of Kneser hypergraphs}, Trans. Amer. Math. Soc. 298 (1986), N1, 359-370.

\bibitem{Bar} I. B\'ar\'any,  \textit{A short proof of Kneser's conjecture}, J. Comb. Th. Ser. A 25 (1978), N3, 325--326.

\bibitem{Boll} B. Bollob\'as, {\it Random Graphs}, Cambridge
University Press, Second Edition, 2001.

\bibitem{BBN} J. Balogh, B. Bollob\'as and B.P. Narayanan, \textit{Transference for the Erd\H os-Ko-Rado theorem}, Forum of Mathematics, Sigma 3 (2015).

\bibitem{BKR}A. Bobu, A. Kupriyanov and A. Raigorodskii, {\it On chromatic numbers of nearly Kneser distance graphs}, Dokl. Math. 93  (2016), N3, 267--269.
\bibitem{BGPR} L.I. Bogolyubskiy, A.S. Gusev, M.M. Pyaderkin and A.M. Raigorodskii, \textit{The independence numbers and the chromatic numbers of random subgraphs of some distance graphs}, Mat. Sbornik 206 (2015), N10, 3--36;  English transl. in Sbornik Math. 206 (2015), N10, 1340--1374.

\bibitem{BGPR2} L.I. Bogolyubskiy, A.S. Gusev, M.M. Pyaderkin and A.M. Raigorodskii, \textit{Independence numbers and chromatic numbers of random subgraphs in some sequences of  graphs}, Doklady of the Russian Acad. Sci. 457 (2014), N4, 383--387; English transl. in Doklady Math 90 (2014), N1, 462--465.

\bibitem{BNR} B. Bollob\'as, B.P. Narayanan and A.M. Raigorodskii, \textit{On the stability of the Erd\H os-Ko-Rado theorem}, J. Comb. Th. Ser. A 137 (2016), 64--78.

\bibitem{DT} S. Das and T. Tran, {\it Removal and Stability for Erd\H{o} s--Ko--Rado}, SIAM J. Discrete Math. 30 (2016), N2, 1102--1114.

\bibitem{DK} P. Devlin and J. Kahn, {\it On ''stability'' in the Erd\H{o}s--Ko--Rado theorem}, SIAM J. Discrete Math. 30 (2016), N2, 1283--1289.
\bibitem{Dol} V.L. Dol’nikov, \textit{Transversals of families of sets}, in Studies in the theory of functions of several real variables (Russian), Yaroslav. Gos. Univ., Yaroslavl’,  109 (1981), 30–-36.

\bibitem{E} P. Erd\H os, \textit{A problem on independent r-tuples}, Ann. Univ. Sci. Budapest. 8 (1965), 93--95.

\bibitem{Erd1} P. Erd\H os, \textit{Problems and results in combinatorial analysis}, Colloq. Internat. Theor. Combin. Rome (1973), Acad. Naz. Lincei, Rome  (1976), 3--17.

\bibitem{EKR} P. Erd\H os, C. Ko and R. Rado, \textit{Intersection theorems for systems of finite sets}, The Quarterly Journal of Mathematics, 12 (1961), N1, 313--320.

\bibitem{F4} P. Frankl, \textit{Improved bounds for Erd\H os' Matching Conjecture}, Journ. of Comb. Theory Ser. A 120 (2013), 1068--1072.

\bibitem{FK6} P. Frankl and A. Kupavskii, \textit{Families with no $s$ pairwise disjoint sets}, Journal of the London Mathematical Society 95 (2017), N3, 875--894.

\bibitem{FK7} P. Frankl and A. Kupavskii, \textit{ Two problems on matchings in set families --- in the footsteps of Erd\H os and Kleitman}, accepted at J. Comb. Th. Ser. B,  arXiv:1607.06126

\bibitem{FK13} P. Frankl and A. Kupavskii, {\it Proof of the Erd\H os Matching Conjecture in a new range}, arXiv:1806.08855
\bibitem{Knez} M. Kneser, \textit{Aufgabe 360}, Jahresbericht der Deutschen Mathematiker-Vereinigung 2 (1955), 27.

\bibitem{Kr1} I. K\v r\'\i\v z, \textit{Equivariant cohomology and lower bounds for chromatic numbers}, Trans. Amer. Math. Soc. 333 (1992), 567--577.

\bibitem{Kr2} I. K\v r\'\i\v z, \textit{A correction to ``Equivariant cohomology and lower bounds for chromatic numbers'' (Trans. Amer. Math. Soc. 333 (1992), 567--577)}, Trans. Amer. Math. Soc. 352 (2000), N4, 1951--1952.

\bibitem{KK} S. Kiselev and A. Kupavskii, \textit{Sharp bounds for the chromatic number of random Kneser graphs}, arxiv:1810.01161

\bibitem{Kup} A. Kupavskii, \textit{On random subgraphs of Kneser and Schrijver graphs}, Journal of Combinatorial Theory, Series A  141 (2016), 8--15.

\bibitem{Lova} L. Lovasz, \textit{Kneser's conjecture, chromatic number, and homotopy}, J. Comb. Th. Ser. A, 25 (1978), N3, 319--324.

\bibitem{Mat} J. Matou\v sek, \textit{Using the Borsuk-Ulam theorem}, Springer, 2003.

\bibitem{Mat2} J. Matou\v sek, \textit{On the chromatic number of Kneser hypergraphs}, Proc. Amer. Math. Soc. 130 (2002), N9, 2509--2514.

\bibitem{M} F. Meunier, \textit{The chromatic number of almost stable Kneser hypergraphs}, J. Comb. Th. Ser. A  118 (2011), N6, 1820--1828.

\bibitem{Pyad} M. Pyaderkin, {\it On the stability of some Erdos-Ko-Rado type results} Discrete Math. 340 (2017), N4, 822--831.
\bibitem{PR} M. Pyaderkin and A. Raigorodskii, {\it On random subgraphs of Kneser graphs and their generalizations},  Dokl. Math. 94 (2016), N2, 547--549.
\bibitem{Rai} A. Raigorodskii, {\it On the stability of the independence number of a random subgraph}, Dokl. Math. 96 (2017), N3, 628--630.

\bibitem{Sch} A. Schrijver, \textit{ Vertex-critical subgraphs of Kneser graphs}, Nieuw Arch. Wiskd., III. Ser., 26 (1978), 454–461.

\bibitem{Z} G. M. Ziegler,  \textit{Generalized Kneser coloring theorems with combinatorial
proofs}, Invent. Math. 147 (2002), 671--691. Erratum ibid., 163 (2006), 227--228.
\end{thebibliography}
\end{document}